\numberwithin{equation}{section}
\newtheorem{theorem}{Theorem}[section]
\newtheorem{lemma}{Lemma}[section]
\newtheorem{proposition}{Proposition}[section]
\newtheorem{definition}{Definition}[section]
\newtheorem{remark}{Remark}[section]
\numberwithin{equation}{section}
\newcommand{\Img}{\mathop\mathrm{Im}\nolimits}
\newcommand{\Ker}{\mathop\mathrm{Ker}\nolimits}
\newcommand{\coKer}{\mathop\mathrm{coKer}\nolimits}
\newcommand{\dimm}{\mathop\mathrm{dim}\nolimits}
\newcommand{\codimm}{\mathop\mathrm{codim}\nolimits}
\title[Existence of periodic solution for a class of neutral differential equations with impulses]{Existence of periodic  solution for a class of neutral differential equations with impulses}
\author[S. M. Afonso]{Suzete M. Afonso}
\address[S. M. S. Afonso]{Universidade Estadual Paulista (UNESP), Instituto de Geoci\^{e}ncias e Ci\^{e}ncias Exatas, 13506-900, Rio Claro SP, Brasil}
\email{smafonso@rc.unesp.br}
\author[A. L. Furtado]{Andr\'e L. Furtado}
\address[A. L. Furtado]{Universidade do Estado do Rio de Janeiro, Departamento de An\'alise Matem\'atica, Instituto de Matem\'atica e Estat\'istica, Rio de Janeiro, Brasil}
\email{andre.furtado@ime.uerj.br}
\begin{document}

\maketitle

\begin{abstract}

By applying a Mawhin's continuation theorem of coincidence degree theory, we establish sufficient conditions for the existence of a periodic solution for a class of impulsive neutral differential equations. The procedure adopted in this work  makes use of a non-impulsive associated equation in order to overcome the difficulties resulting from the moments of impulse effects. 

\end{abstract}
\vspace{0.5cm}

\noindent \textbf{Keywords}: Impulsive neutral differential equations, Coincidence degree, Continuation theorems.

\noindent \textbf{Mathematics Subject Classification (2010)}: 34A37; 34K45; 34B15; 47H11.

\section{Introduction}
\hspace*{0.5cm}

Impulsive differential equations have gained an increasing importance in recent years due to their great suitability as a tool in modeling evolutionary processes {undergoing} sudden changes of states whose duration is negligible with respect to the duration of the whole process. These events occur in many areas of applied science such as mechanical systems with impact, population dynamics (when, for example, there are abrupt variations in population size), systems such as heart beats and blood flows, economics models, optimal control, and frequency modulation systems (see e.g.\ \cite{bainov1}, \cite{fu}, \cite{halanay}, \cite{Lakshmi} and \cite{Anatolii}). 
 
Neutral differential equations are a relevant particular type of functional differential equations. These equations apply to science and technology as, for instance, models in some variational problems, electrical networks containing lossless transmission lines and problems dealing with vibrating masses attached to an elastic bar (see e.g.\ \cite{Hale}).

{In \cite{zhao}, Jan and Zhao studied the oscillation of the solutions and the stability of zero solution of the first order linear delay impulsive differential equation }
\begin{equation}\label{00}
{\left\{
\begin{array}{ll}
x^\prime(t) + \displaystyle\sum_{i=1}^{n} p_i(t)x(t-\tau_i(t)) = 0 & \mbox{if}\  t\geq t_0,\ t\neq t_k\vspace{0,3cm}\\
x(t_k^+)-x(t_k)=b_k x(t_k)& \mbox{if}\ k=1,2,\ldots ,
\end{array}
\right.}
\end{equation}
{where $t_0\leq  t_1< t_2< \cdots < t_k<\cdots$, $t_k\to \infty$, $p_i:[t_0,\infty) \to \mathbb{R}$ are locally summable functions, $\tau_i:[t_0, \infty) \to [0,\infty)$ are Lebesgue measurable functions such that $t-\tau_i(t)\to \infty$ as $t\to \infty$ and $b_k\in (-\infty,-1)\cup (-1,\infty)$ are constant. 
They reduced the oscillation and nonoscillation of solutions of \eqref{00} and stability of
the zero solution of \eqref{00} to the corresponding problem, respectively, for a
delay differential equation without impulses. }

Recently, M. Li {\it et al} \cite{meili} applied the coincidence degree theory to investigate the existence of periodic solutions for the following impulsive differential problem with discrete delay:
\begin{equation*}
\left\{
\begin{array}{ll}
x^\prime(t)=f(t,x(t+\tau))& \mbox{if}\ \tau \in (-\infty, 0],\ t\geq 0,\ t\neq t_k\vspace{0,3cm}\\
x(t_k^+)-x(t_k)=b_k x(t_k)& \mbox{if}\ k=1,2,\ldots ,
\end{array}
\right.
\end{equation*}
where the $t_k$'s form a increasing and unbounded sequence of positive real numbers, $f$ is a real function defined on $(0,+\infty)\times \mathbb{R}$ and the $b_k$'s are real numbers larger than $-1$. {Making use of a non-impulsive equation conveniently associated with the above impulsive problem as in \cite{zhao}, the authors found sufficient conditions for the existence of periodic solutions.}

{Motivated by the techniques developed in \cite{zhao} and \cite{meili}}, in this paper we study the existence of a {T-periodic solution ($T>0$)} for the following impulsive neutral problem:
\begin{equation}\label{s1}
\left\{
\begin{array}{ll}
x^\prime(t) + Bx^\prime(t-\delta)=f(t,x_t)& \mbox{if}\ t\geq 0,\ t\neq t_1,t_2,\ldots\vspace{0,3cm}\\
x(t_k^+)-x(t_k)=b_k x(t_k)& \mbox{if}\ k=1,2,\ldots,
\end{array}
\right.
\end{equation}
where
\begin{itemize}
\item $f:[0,+\infty)\times G([-r,0],\mathbb{R}) \to \mathbb{R}$ is a $T$-periodic function on the first variable and such that, if $\varphi\in G([-r,T],\mathbb{R})$, then the function $t\mapsto f(t,\varphi_t)$, defined in $[0,T]$, lies in $G([0,T],\mathbb{R})$. Here, given an interval $[a,b]\subset \mathbb{R}$, the notation $G([a,b],\mathbb{R})$ denotes the Banach space of the regulated functions $x:[a,b]\to \mathbb {R}$ endowed with the supremum norm $\|x\|_{\infty}=\displaystyle\sup_{t\in[a,b]}|x(t)|$;\vspace{0,1cm}
\item $B$, $r$, and  $\delta$ are real numbers such that $0\leq \delta <  r\leq T$\vspace{0,1cm};
\item $t_1, t_2, \ldots$ are the {\it moments of impulse effects} of the problem and correspond to possible discontinuities of the solution and satisfies $0<t_1<\ldots<t_m<T-\delta$, $t_{m+k}=t_k+T$, $k=1,2,\ldots$\vspace{0,1cm};
\item $-1<b_1\ldots<b_m<T$, $b_{m+k}=b_k+T$, $k=1,2,\ldots$\vspace{0,1cm};
\item $x(t_k^+)$ represents the right limit $\lim_{t\to t_k^+}x(t)$\vspace{0,1cm};
\item Given a map $x:[-r,T]\to \mathbb{R}$ and $t\in[0,+\infty)$, the symbol $x_t$ denotes the function $x_t:[-r,0]\to\mathbb{R}$ given by $x_t(\tau)=x(t+\tau)${, for $\tau \in [-r,0]$}.
\end{itemize}

{Let us recall the concept of regulated functions, which can be found in \cite{didi}}. Given $\alpha, \beta\in \mathbb{R}, \alpha<\beta$, we say that a function $\phi:[\alpha,\beta]\to \mathbb{R}$ is {\it regulated} if

\[
\lim_{t\to \tau^-}\phi(t)\in \mathbb{R},\quad \mbox{for every}\ \tau\in (\alpha,\beta]
\]
and
\[
\lim_{t\to \tau^+}\phi(t)\in \mathbb{R},\quad \mbox{for every}\ \tau\in [\alpha,\beta).
\]

Let us define what we mean by a solution of problem (\ref{s1}).

\begin{definition}\label{definicao1neutra}
A function
$x:[-r,+\infty)\to\mathbb {R}$ is said to be a solution of problem (\ref{s1}) if
\begin{itemize}
\item[$(i)$] $x$ is absolutely continuous on each interval
$[-\delta,t_1],\ (t_k,t_{k+1}],\ k=1,2\ldots$;
  \item[$(ii)$] $x^\prime(t) + Bx^\prime(t-\delta)=f(t,x_t)$ almost everywhere in $[0,+\infty)\setminus\{t_1,t_2\ldots\}$;
\item[$(iii)$]$x(t_k^+)-x(t_k)=b_k x(t_k)$, $k=1,2,\ldots$.
\end{itemize}
\end{definition}
We say that $x$ is a $T$-periodic solution of (\ref{s1}) if $x$ satisfies the conditions given in Definition \ref{definicao1neutra} and is $T$-periodic on $[0,+\infty)$.

In order to find sufficient conditions for existence of a $T$-periodic solution for problem (\ref{s1})  it is {convenient to consider} the impulsive problem:
\begin{equation}\label{s3}
\left\{
\begin{array}{ll}
x^\prime(t) + Bx^\prime(t-\delta)=f(t,x_t)& \mbox{if}\ t\in[0,T]\setminus\{t_1,\ldots,t_m\}\vspace{0,3cm}\\
x(t_k^+)-x(t_k)=b_k x(t_k)& \mbox{if}\ k=1,\ldots,m,\vspace{0,3cm}\\
x(0)=x(T),
\end{array}
\right.
\end{equation}
where $B, \delta$ and $T$ are as in problem \eqref{s1}.

\begin{definition}\label{definicao3neutra}
A function
$x:[-r,T]\to\mathbb {R}$ is said to be a solution of problem (\ref{s3}) if
\begin{itemize}
\item[$(i)$] $x$ is absolutely continuous on each interval
$[-\delta,t_1],\ (t_1,t_2],\ldots,(t_m,T]$;
  \item[$(ii)$] $x^\prime(t) + Bx^\prime(t-\delta)=f(t,x_t)$ almost everywhere in $[0,T]\setminus\{t_1,\ldots,t_m\}$;
\item[$(iii)$]$x(t_k^+)-x(t_k)=b_k x(t_k)$ for $k\in \{1,\ldots m\}$;
\item[$(iv)$] $x(0)=x(T)$.
\end{itemize}
\end{definition}

This paper is organized as follows. In Section $2$ we present a non-impulsive problem appropriately associated with problem \eqref{s3} and establish useful relations between their possible solutions. In Section $3$ we obtain conditions ensuring the existence of a $T$-periodic solution of problem (\ref{s1}). Finally, in Section $4$, we present an example showing the effectiveness of the obtained result.

\section{Preliminaries}
\hspace*{0.5cm}

In what follows we present a non-impulsive problem for which the existence of a solution {implies} the same property for impulsive problem (\ref{s3}) and vice versa.

Define the functions $\beta: [-r,T]\to (0,+\infty)$ and $h:[0,T]\times G([-r,0],\mathbb{R})\to \mathbb{R}$ by
\begin{equation}\label{denitionbeta}
\beta(t)=\left\{
\begin{array}{lll}
\underset{t_k<t}\prod (1+b_k)& \mbox{if}\ t\in (t_1,T-\delta) \vspace{0,3cm}\\
1& \mbox{if}\ t\in [-r,t_1]\cup[T-\delta,T]\vspace{0,3cm}
\end{array}
\right.
\end{equation}
and
\[
h(t,\varphi)= \displaystyle\frac{f(t,\beta_t\varphi)}{\beta(t)}.
\]

Consider the neutral non-impulsive problem
\begin{equation}\label{s2}
\left\{
\begin{array}{ll}
u^\prime(t)+\dfrac{B\beta(t-\delta)}{\beta(t)}u^\prime(t-\delta)=h(t,u_t)& \mbox{if}\ t\in[0,T] \vspace{0,3cm}\\
u(0)=u(T),
\end{array}
\right.
\end{equation}
where $B, \delta$ and $T$ are as in problem \eqref{s1} and \eqref{s3}.

\begin{definition}\label{definicao2neutras}
A function
$u:[-r,T]\to\mathbb {R}$
is said to be a solution of problem (\ref{s2}) if
\begin{itemize}
\item[$(i)$]$u$ is absolutely continuous on the interval $[-\delta,T]$;
\item[$(ii)$]$u^\prime(t)+\displaystyle\frac{B\beta(t-\delta)u^\prime(t-\delta)}{\beta(t)}=h(t,u_t)$, almost everywhere in $[0,T]$.
\item[$(iii)$]u(0)=u(T)
\end{itemize}
\end{definition}

Next lemma establishes an important connection between the solutions of problems (\ref{s2}) and (\ref{s3}).

\begin{lemma}\label{l1}
Problem (\ref{s2}) has one solution if and only if the same occurs with problem (\ref{s3}).
\end{lemma}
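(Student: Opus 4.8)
The plan is to construct an explicit bijection between solutions of the two problems via the multiplicative substitution suggested by the definition of $\beta$. Given a solution $x:[-r,T]\to\mathbb{R}$ of the impulsive problem \eqref{s3}, I would define $u(t)=x(t)/\beta(t)$ for $t\in[-r,T]$, and conversely, given a solution $u$ of the non-impulsive problem \eqref{s2}, I would set $x(t)=\beta(t)u(t)$. The heart of the argument is to verify that this transformation converts the jump conditions $x(t_k^+)-x(t_k)=b_kx(t_k)$ into continuity (absolute continuity across the $t_k$), and that it correctly transforms the neutral differential equation. The factor $1+b_k$ is exactly the multiplicative jump of $\beta$ at $t_k$: since $\beta(t)=\prod_{t_j<t}(1+b_j)$ on $(t_1,T-\delta)$, one has $\beta(t_k^+)/\beta(t_k)=1+b_k$ (being careful with whether the product at $t_k$ includes the $k$-th factor — $\beta$ is left-continuous at $t_k$ and jumps by the factor $1+b_k$ from the right), so that $u(t_k^+)=x(t_k^+)/\beta(t_k^+)=(1+b_k)x(t_k)/((1+b_k)\beta(t_k))=x(t_k)/\beta(t_k)=u(t_k)$, giving continuity of $u$ at each $t_k$.

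The steps I would carry out, in order, are: (1) record the elementary properties of $\beta$ — it is positive, piecewise constant, equal to $1$ on $[-r,t_1]\cup[T-\delta,T]$, left-continuous everywhere, with $\beta(t_k^+)=(1+b_k)\beta(t_k)$, and in particular $\beta$ is differentiable with $\beta'\equiv 0$ on each interval of constancy, while $\beta(0)=\beta(T)=1$; (2) check that the substitution $x=\beta u$ (resp.\ $u=x/\beta$) preserves absolute continuity on the relevant subintervals, using that $\beta$ is constant on each of $[-\delta,t_1],(t_1,t_2],\dots,(t_m,T]$ — note here we need $\delta<r$ and the location of the $t_k$ in $(0,T-\delta)$ so that the shifted argument $t-\delta$ lands in regions where $\beta$ behaves well; (3) verify the jump/continuity correspondence at each $t_k$ as above, so that $u$ being continuous across $t_k$ is equivalent to $x$ satisfying condition $(iii)$ of Definition \ref{definicao3neutra}; (4) verify the boundary condition: $u(0)=u(T)\iff x(0)/\beta(0)=x(T)/\beta(T)\iff x(0)=x(T)$ since $\beta(0)=\beta(T)=1$; (5) transform the differential equation itself — substituting $x=\beta u$ into $x'(t)+Bx'(t-\delta)=f(t,x_t)$ and using $\beta'=0$ a.e., we get $\beta(t)u'(t)+B\beta(t-\delta)u'(t-\delta)=f(t,\beta_t u_t)$ a.e.\ (noting $x_t=\beta_t u_t$ as functions on $[-r,0]$, which is where the translated cocycle $\beta_t$ enters), and dividing by $\beta(t)>0$ yields exactly $u'(t)+\frac{B\beta(t-\delta)}{\beta(t)}u'(t-\delta)=h(t,u_t)$, which is condition $(ii)$ of Definition \ref{definicao2neutras}; the reverse direction is the same computation run backwards.

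The main obstacle I anticipate is the careful bookkeeping around the moments of impulse: one must confirm that absolute continuity of $u$ on the \emph{single} interval $[-\delta,T]$ (as demanded by Definition \ref{definicao2neutras}$(i)$) really does follow from piecewise absolute continuity of $x$ plus the matching of jumps — i.e., that the potential discontinuities of $u=x/\beta$ at the $t_k$ are exactly cancelled, leaving an absolutely continuous function, and similarly that no new bad points are introduced at $T-\delta$ or at $t_1$ where $\beta$'s definition changes (there $\beta$ is continuous, equal to $1$ on the closed pieces, so no issue arises). A secondary delicate point is that the term $\beta(t-\delta)$ appearing in \eqref{s2} must be well-defined and the quotient $\beta(t-\delta)/\beta(t)$ must not blow up: since $\beta>0$ everywhere this is fine, but one should note that for $t\in[0,\delta]$ the argument $t-\delta\in[-\delta,0]\subset[-r,t_1]$ so $\beta(t-\delta)=1$, and for $t$ near $T$ the argument $t-\delta$ may lie in the region $[T-\delta,T]$ where again $\beta=1$ — these remarks make the coefficient function in \eqref{s2} perfectly well-behaved (regulated, bounded). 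Once these points are nailed down, the equivalence is immediate from the explicit inverse maps $x\mapsto x/\beta$ and $u\mapsto\beta u$.
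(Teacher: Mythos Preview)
Your proposal is correct and follows essentially the same approach as the paper: both directions are handled via the explicit substitutions $x=\beta u$ and $u=x/\beta$, with verification of (piecewise) absolute continuity, the jump/continuity correspondence $\beta(t_k^+)=(1+b_k)\beta(t_k)$, the boundary condition via $\beta(0)=\beta(T)=1$, and the transformation of the differential equation using $\beta'=0$ a.e.\ together with $x_t=\beta_t u_t$. The paper's proof carries out exactly these five checks in the same order and with the same computations you outline.
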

\begin{proof}
Suppose that a function $u:[-r,T]\to \mathbb{R}$ is a solution of problem (\ref{s2}).
Consider the function $x(t)=\beta(t)u(t), t\in[-r,T]$. Since $u$ and $\beta$ are absolutely continuous on each interval $[-\delta,t_1],\ (t_1,t_2],\ldots,(t_m,T]$, the same is {valid} for $x$. That is, $x$ fulfills condition $(i)$ of Definition \ref{definicao3neutra}.

On the other hand, by the second item of Definition \ref{definicao2neutras}, we have
\[
x^\prime(t)+Bx^\prime(t-\delta)-f(t,x_t)
\]
\[
=\beta(t)u^\prime(t)+B\beta(t-\delta)u^\prime(t-\delta)-f(t,\beta_tu_t)
\]
\[=\beta(t)\left[u^\prime(t)+\frac{B\beta(t-\delta)u^\prime(t-\delta)}{\beta(t)}-\frac{f(t,\beta_tu_t)}{\beta(t)}\right]
\]
\[
=\beta(t)\left[u^\prime(t)+\frac{B\beta(t-\delta)u^\prime(t-\delta)}{\beta(t)}-h(t,u_t)\right]=0,
\]
for almost every $t\in [0,T]\setminus \{t_1,\ldots,t_m\}$. {Thus}, $x$ satisfies condition $(ii)$ of Definition \ref{definicao3neutra}.

Now, let us prove that $x$ verifies condition $(iii)$ of the above mentioned definition. For each $k\in \{1,\ldots m\}$ we have
\begin{equation*}
x(t_k^+)-x(t_k)= \lim_{t\to t_k^+}x(t)-x(t_k)=\lim_{t\to t_k^+}u(t)\beta(t)-x(t_k)=\lim_{t\to t_k^+}u(t)\prod_{t_j<t}(1+b_j)-x(t_k),
\end{equation*}
i.e.,
\begin{eqnarray*}
x(t_k^+)-x(t_k)&=&u(t_k)\prod_{t_j\leq t_k}(1+b_j)-x(t_k)=\frac{x(t_k)}{\beta(t_k)}(1+b_k)\prod_{t_j< t_k}(1+b_j)-x(t_k)\\\\
&=&\frac{x(t_k)}{\beta(t_k)}(1+b_k)\beta (t_k)-x(t_k)=b_kx(t_k).
\end{eqnarray*}

Finally, from the definition of $\beta$ and item $(iii)$ of Definition \ref{definicao2neutras}, it follows that $x(0)=x(T)$, that is, $x$ satisfies item $(iv)$ of Definition \ref{definicao3neutra}.

Let us prove that, if impulsive problem (\ref{s3}) has a solution, then non-impulsive problem (\ref{s2}) also has a solution. Suppose that $x:[-r,T]\to \mathbb{R}$ is a solution of problem (\ref{s3}). Consider the function $u(t)=x(t)/\beta(t), t\in [-r,T]$.
Since $x$ and $\beta$ are absolutely continuous  on the intervals $[-\delta,t_1],$ $(t_1,t_2]$, $\ldots$, $(t_m,T]$, then so is $u$. Thus, to conclude that $u$ satisfies item $(i)$ of Definition \ref{definicao2neutras} it is sufficient to prove that $u$ is right continuous on each moment of impulse $t_k$. In fact, for each $k=1,\ldots, m$, we have
\begin{eqnarray*}
\lim_{t\to t_k^+} u(t)&=&\lim_{t\to {t_k}^+}\frac{x(t)}{\beta(t)}=\lim_{t\to t_k^+}x(t)\prod_{t_j<t}(1+b_j)^{-1}
=x(t_k^+)\prod_{t_j\leq t_k}(1+b_j)^{-1}\\\\
&=&(1+b_k)x(t_k)(1+b_k)^{-1}\prod_{t_j<t}(1+b_j)^{-1}=\frac{x(t_k)}{\beta(t_k)}=u(t_k).
\end{eqnarray*}

Let us see now that $u$ satisfies condition $(ii)$ of Definition \ref{definicao2neutras}. Since the function $x$
is solution of problem (\ref{s3}), then
\begin{eqnarray*}
u^\prime(t)&=&\frac{x^\prime(t)}{\beta(t)}=\frac{f(t,x_t)-Bx^\prime(t-\delta)}{\beta(t)}\\\\
&=&\frac{f(t,\beta_tu_t)}{\beta(t)}-\frac{B\beta(t-\delta)u^\prime(t-\delta)}{\beta(t)}\\\\
&=&h(t,u_t)-\frac{B\beta(t-\delta)u^\prime(t-\delta)}{\beta(t)},
\end{eqnarray*}
for almost every $t\in[0,T]$.

Finally, from the definitions of $\beta$ and $x$ { it} follows that $u(0)=u(T)$, that is, $u$ satisfies condition $(iii)$ of Definition \ref{definicao2neutras}.
\end{proof}

\begin{remark}\label{remarkequivalencia}
By the demonstration of Lemma \ref{l1} we can infer that if non-impulsive problem (\ref {s2}) has a solution  on $[-r,T]$, then impulsive problem (\ref{s1}) has a $T$-periodic solution on $[-r,+\infty)$. In fact, if problem (\ref{s2}) has a solution on $[-r,T]$, then, by Lemma \ref {l1}, problem (\ref {s3}) has a solution $\hat{x}:[-r,T]\to \mathbb{R}$. Thus, the function $x:[-r,+\infty)\to \mathbb{R}$ given by $x(t)=\hat{x}(t)$ if $t\in [-r,T]$ and $x(t)=x(t-T)$ if $t\in(T,+\infty)$ is a $T$-periodic solution of impulsive problem (\ref{s1}).
\end{remark}

\section{Existence}

\hspace*{0.5cm}

{ Let us c}onsider the following {assumptions}:

\begin{enumerate}
\item[$(H1)$] If $\varphi:[-r,0]\to \mathbb{R}$ is absolutely continuous, then the function $t\mapsto f(t,\beta_t\varphi)$ is continuous on $[0,T]$;
\item[$(H2)$] There is a positive constant
$d$
such that, if
$\varphi \in G([-r,0],\mathbb{R})$
and
$|\varphi(0)|\geq d$,
then
\[
\varphi(0)f(t,\beta_t\varphi)>0,\quad t\in [0,T];
\]
\item[$(H3)$] $B<\displaystyle\frac{k}{K}$, where $k= \displaystyle\min_{-r\leq t\leq T}\beta(t)$ and $K=\displaystyle\max_{-r\leq t\leq T}\beta(t)$;
\item[$(H4)$]There is a positive constant $b<\displaystyle\frac{k-BK}{Tk}$ such that, if $\varphi,\psi\in G([-r,0],\mathbb{R})$, then
\[
|f(t,\varphi)-f(t,\psi)|\leq b|\varphi(0)-\psi(0)|,\quad t\in[0,T].
\]
\end{enumerate}

Our purpose in this section is to prove the following result.

\begin{theorem}\label{teoremaprincipalneutra}
If conditions $(H1)$ to $(H4)$ are satisfied, then problem (\ref{s1}) has at least one $T$-periodic solution.
\end{theorem}

To prove Theorem \ref{teoremaprincipalneutra} we actually show that conditions $(H1)$ to $(H4)$ imply the existence of a solution of problem \eqref{s2} on $[-r,T]$. Afterwards, Lemma \ref{l1} and Remark \ref{remarkequivalencia} complete our strategy. In order to do it,
 we use  a Mawhin's continuation theorem (Lemma \ref{teodemawhin} below), whose proof can be found in \cite{mawhin}, p. 40.
We need first  some basic concepts of functional analysis.

Let $X$ and $Y$ be Banach spaces and $W$ a subspace of $X$ with the induced norm. A {\it Fredholm operator} is a linear operator
$L:W \to Y$
whose range is a closed subspace of $Y$ and such that $\Ker L$ and $\coKer$ are finite dimensional. We define the {\it index} of $L$ by
$\dim \Ker L - \dim \coKer$.
It is well-known (see \cite{mawhin}) that, if
$L$ is a Fredholm operator of index zero, then there are linear, continuous and idempotent operators
$P:X\to X$
and
$Q:Y \to Y$
satisfying
\begin{equation}\label{KerimLneutra}
\Ker L = \Img P \hspace{1cm}\mbox{and}\hspace{1cm}\Img L = \Ker Q.
\end{equation}

The first expression in (\ref{KerimLneutra}) implies that the restriction of $L$ to $W \cap \Ker P$, $L_P:W \cap \Ker P\to L(W \cap \Ker P)$,
 is an isomorphism.

If
$L:W\to Y$
is a Fredholm operator of index zero and $P,\ Q$ are the operators mentioned above, then we say that a continuous operator
$N:X\to Y$
is {\it $L$-compact} in the closure
$\overline{\Omega}$
of a bounded open subset
$\Omega$ of $X$
if
$QN(\overline{\Omega})$
is bounded and
$(L_P)^{-1}(Id-Q)N: \overline{\Omega}\to X$
is a compact operator.

\begin{lemma}[Mawhin's Continuation Theorem]\label{teodemawhin}
Let
$X, Y$ be Banach spaces and $W$ a subspace of $X$ with the induced norm. Let
$L: W\to Y$ be a Fredholm operator of index zero and
$N: X\to Y$
a $L$-compact operator in
$\overline{\Omega}$,
where
$\Omega$
is an open bounded subset of
$X$.
Suppose, moreover, the following conditions hold:
\begin{itemize}
\item [($i$)]If $x\in \partial\Omega\cap W$ and $\lambda\in (0,1)$, then $Lx \neq \lambda Nx$ (here, $\partial\Omega$ denotes the boundary of $\Omega$ with respect to $X$);
\item [($ii$)]If $x\in \partial\Omega\cap \Ker L$, then $QNx \neq 0$;
\item [($iii$)] $\deg(JQN,\Omega \cap \Ker L, 0)\neq 0$, where $J:\Img Q\to \Ker L$ is any isomorphism, $JQN$ is defined on $\Ker L$ and $\deg(JQN,\Omega \cap \Ker L, 0)$ denotes the Brower degree of the triple $(JQN,\Omega \cap \Ker L, 0)$.
\end{itemize}
Under these conditions, the equation
$Lx=Nx$
has at least one solution in $\overline{\Omega}\cap W$.
\end{lemma}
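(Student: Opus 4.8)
The plan is to establish this abstract result by the standard coincidence-degree argument: convert $Lx=Nx$ into an equivalent fixed-point equation for a compact operator and then compute a Leray--Schauder degree through an admissible homotopy. Since $L$ is Fredholm of index zero, I would first fix the continuous projectors $P,Q$ with $\Img P=\Ker L$ and $\Ker Q=\Img L$ (as recalled in \eqref{KerimLneutra}), write $L_P=L|_{W\cap\Ker P}$ for the isomorphism onto $\Img L$ and $K_P=(L_P)^{-1}$ for its inverse, and use the given isomorphism $J:\Img Q\to\Ker L$ (which exists precisely because $\ind L=0$, so $\dim\Ker L=\dim\Img Q$). Setting $M:=P+JQN+K_P(I-Q)N$, the heart of the reduction is the algebraic equivalence, valid for $x\in W$ and $\lambda\in(0,1]$, that $Lx=\lambda Nx$ holds if and only if $x=Px+JQNx+\lambda K_P(I-Q)Nx$: applying $Q$ forces $QNx=0$, applying $P$ isolates the $\Ker L$-component and kills $JQNx$, and applying $L$ recovers the original equation. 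In particular, coincidence points of $(L,N)$ are exactly the fixed points of $M$, and every such fixed point automatically lies in $W$ because each summand of $M$ has range in $W$.

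Next I would verify that $M$ is compact on $\overline{\Omega}$, so that the Leray--Schauder degree is available. The operator $P$ has finite-dimensional range $\Ker L$ and is therefore compact; $JQN$ is continuous with range in $\Ker L$ while $QN(\overline{\Omega})$ is bounded, so it too is compact; and $K_P(I-Q)N$ is compact by the hypothesis that $N$ is $L$-compact. Thus $I-M$ is a compact perturbation of the identity. I would then introduce the homotopy $\mathcal{H}(x,\lambda)=x-Px-JQNx-\lambda K_P(I-Q)Nx$ for $\lambda\in[0,1]$, noting that $\mathcal{H}(\cdot,1)=I-M$ and that each $\mathcal{H}(\cdot,\lambda)$ is a compact perturbation of the identity depending continuously on $\lambda$.

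To invoke homotopy invariance I must rule out zeros of $\mathcal{H}(\cdot,\lambda)$ on $\partial\Omega$ for every $\lambda$. By the equivalence above, for $\lambda\in(0,1)$ a boundary zero would give $x\in\partial\Omega\cap W$ with $Lx=\lambda Nx$, contradicting hypothesis $(i)$; the case $\lambda=1$ is disposed of by assuming no coincidence point lies on $\partial\Omega\cap W$, for otherwise that point is already the desired solution in $\overline{\Omega}\cap W$. At $\lambda=0$ the equation $\mathcal{H}(x,0)=0$ reads $(I-P)x=JQNx$, and since the left side lies in $\Ker P$ while the right lies in $\Ker L=\Img P$, both must vanish, so $x\in\Ker L$ with $QNx=0$; hypothesis $(ii)$ forbids this on $\partial\Omega\cap\Ker L$. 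Hence $\mathcal{H}$ is admissible and $\deg_{LS}(I-M,\Omega,0)=\deg_{LS}(\mathcal{H}(\cdot,0),\Omega,0)$.

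Finally I would compute the latter degree by the reduction property of the Leray--Schauder degree. Since $P+JQN$ maps $\overline{\Omega}$ into the finite-dimensional subspace $\Ker L$, the degree reduces to the Brouwer degree of the restriction of $I-P-JQN$ to $\Omega\cap\Ker L$; on $\Ker L$ one has $Px=x$, so this restriction equals $-JQN$, giving $\deg_{LS}(I-M,\Omega,0)=(-1)^{\dim\Ker L}\deg(JQN,\Omega\cap\Ker L,0)$, which is nonzero by hypothesis $(iii)$. The solution property of the degree then produces $x\in\Omega\cap W$ with $Mx=x$, that is $Lx=Nx$, completing the proof. I expect the main obstacle to be the bookkeeping of the second paragraph together with the final reduction: one must verify the coincidence-to-fixed-point equivalence cleanly and, in the reduction, correctly identify the restricted map and keep track of the factor $(-1)^{\dim\Ker L}$, which fortunately does not affect nonvanishing.
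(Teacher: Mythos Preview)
Your argument is correct and is precisely the standard coincidence-degree proof; note, however, that the paper does not give its own proof of Lemma~\ref{teodemawhin} at all---it merely quotes the statement and refers to \cite{mawhin}, p.~40. What you have outlined (the equivalence $Lx=\lambda Nx\Leftrightarrow x=Px+JQNx+\lambda K_P(I-Q)Nx$, the compact homotopy $\mathcal{H}$, the boundary analysis via hypotheses $(i)$--$(ii)$, and the reduction to the Brouwer degree on $\Ker L$) is exactly the Gaines--Mawhin argument cited there, so your proposal matches the source the paper relies on.
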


Let $X= G([0,T],\mathbb{R})$ and let $W$ be the normed subspace of $X$ of the functions $x$
absolutely continuous such that $x(0)=x(T)$.
Define the operators $A:X\to X$, $L:W \to X$ and $N:X \to X$ by
\begin{equation}\label{definicaoA}
Ax(t)=x(t)+\frac{B\beta(t-\delta)x(t-\delta)}{\beta(t)},\quad t\in[0,T],
\end{equation}
\begin{equation}\label{definicaodeLeNneutra}
Lx = Ax^\prime,
\end{equation}
\begin{equation}\label{definicaodeNeNneutra}
Nx(t)=h(t,x_t),\quad t\in[0,T].
\end{equation}

\begin{remark}\label{convencao}
Due to the presence of  `$x(t-\delta)$' and `$x_t$' in the definitions of operators $A$ and $N$ we will adopt the convention $x(t)=x(t+T)$ for $t\in[-r,0)$, which will not affect our results.
\end{remark}

\begin{theorem}\label{teoremaexistenciasemimpulsoneutra}
If conditions $(H1)$ to $(H4)$ are fulfilled, then non-impulsive problem (\ref{s2}) has at least one solution  on $[-r,T]$.
\end{theorem}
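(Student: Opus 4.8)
The plan is to apply Mawhin's Continuation Theorem (Lemma~\ref{teodemawhin}) to the operators $L,N$ of \eqref{definicaodeLeNneutra}--\eqref{definicaodeNeNneutra} on a ball $\Omega=\{x\in X:\|x\|_\infty<R\}$ with $R$ large, and afterwards to bootstrap the regularity of the solution thus obtained. First I set up the Fredholm framework. By $(H3)$, $\sup_{t\in[0,T]}|B\beta(t-\delta)/\beta(t)|\le BK/k<1$, so $A=Id+C$ with $\|C\|_{\mathcal L(X)}<1$, where $(Cx)(t)=\frac{B\beta(t-\delta)}{\beta(t)}x(t-\delta)$ (values $x(t-\delta)$ with $t-\delta<0$ read by the convention of Remark~\ref{convencao}); hence $A$ is an isomorphism of $X$. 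Since $Ax'=0$ forces $x'=0$, $\Ker L$ is the one--dimensional space of constant functions; and since $\{x':x\in W\}=\{g\in X:\int_0^Tg=0\}$, one has $\Img L=A(\{g\in X:\int_0^Tg=0\})$, a closed subspace of codimension one. Thus $L$ is Fredholm of index zero. I take the projectors $Px=\frac1T\int_0^Tx$ and $Qy=(\frac1T\int_0^TA^{-1}y)\,\mathbf e$ with $\mathbf e:=A\mathbf 1$, $\mathbf 1$ the constant function $1$; then $\Ker L=\Img P$, $\Img L=\Ker Q$. From $(H4)$ one gets $|h(t,\varphi)-h(t,\psi)|\le b\,|\varphi(0)-\psi(0)|$, and since the standing hypotheses on $f$ make $M:=\sup_{[0,T]}|f(t,0)|$ finite, $N$ is Lipschitz and carries bounded sets to bounded sets ($|h(t,x_t)|\le M/k+b\|x\|_\infty$); as $(L_P)^{-1}$ is essentially an integration operator, it is compact by Arzel\`a--Ascoli, so $N$ is $L$--compact on $\overline\Omega$.

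The one auxiliary fact that powers the three Mawhin conditions is: \emph{if $g\in X$ and $g(t)>0$ for all $t\in[0,T]$, then $\int_0^T(A^{-1}g)(t)\,dt>0$.} Indeed, as $\beta>0$ the coefficient $c(t)=B\beta(t-\delta)/\beta(t)$ has a constant sign, so $C$ is a sign--definite operator of norm $<1$; writing $A^{-1}=\sum_{n\ge0}(-C)^n$, if $B\le0$ each term $(-C)^n g$ is nonnegative, while if $B\ge0$ one pairs consecutive terms and uses $\int_0^T|C^{n+1}g|\le\|C\|\int_0^T|C^n g|$ (the $\delta$--shift preserving $\int_0^T$ under the periodicity convention), obtaining $\int_0^TA^{-1}g\ge(1-\|C\|)\int_0^Tg>0$.

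The heart of the argument, and what I expect to be the main obstacle, is Mawhin's condition $(i)$ --- the a priori bound. Let $x\in W$, $\lambda\in(0,1)$ with $Lx=\lambda Nx$, i.e.\ $x'(t)+c(t)x'(t-\delta)=\lambda h(t,x_t)$ a.e., equivalently $x'=\lambda A^{-1}Nx$. \textit{Step 1.} There is $t_0\in[0,T]$ with $|x(t_0)|<d$: otherwise $x$, continuous with $x(0)=x(T)$, keeps a constant sign on $[0,T]$, so by $(H2)$ the function $t\mapsto h(t,x_t)=f(t,\beta_tx_t)/\beta(t)$ keeps that same sign; then $Nx$ is of one sign and the auxiliary fact gives $\int_0^T(A^{-1}Nx)(t)\,dt\neq0$, contradicting $\int_0^TA^{-1}Nx=\tfrac1\lambda\int_0^Tx'=\tfrac1\lambda(x(T)-x(0))=0$. \textit{Step 2.} From $x'(t)=\lambda h(t,x_t)-c(t)x'(t-\delta)$ with $|h(t,x_t)|\le M/k+b\|x\|_\infty$ and $\|c\|_\infty\le BK/k$, taking suprema and using $(H3)$ gives $\|x'\|_\infty\le(M+bk\|x\|_\infty)/(k-BK)$. \textit{Step 3.} Hence $|x(t)|\le|x(t_0)|+T\|x'\|_\infty\le d+T(M+bk\|x\|_\infty)/(k-BK)$, and since $(H4)$ says precisely that $Tbk/(k-BK)<1$, this rearranges to $\|x\|_\infty\le R_1$ for a constant $R_1$ independent of $x,\lambda$. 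Taking $R>\max\{R_1,d\}$ makes $(i)$ hold. The whole delicacy is in the neutral term: $(H3)$ (sign--definiteness and contractivity of $c$) makes Steps 1--2 work, and the sharp smallness of $b$ in $(H4)$ closes Step 3.

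Conditions $(ii)$ and $(iii)$ are then quick. On $\partial\Omega\cap\Ker L=\{\pm R\}$, for $x\equiv R$ we have $|x(0)|=R>d$, so $(H2)$ gives $Nx(t)=h(t,R)>0$ for all $t$; by the auxiliary fact $\int_0^TA^{-1}Nx>0$, hence $Nx\notin\Img L$, i.e.\ $QNx\neq0$ (the case $x\equiv-R$ is symmetric): this is $(ii)$. Identifying $\Ker L\cong\R$ via the isomorphism $J$ with $J(\mathbf e)=1$, one computes $JQN(a)=\frac1T\int_0^T(A^{-1}h(\cdot,a))(t)\,dt$, which by the auxiliary fact is positive at $a=R$ and negative at $a=-R$; thus $JQN$ does not vanish on $\{\pm R\}$ and $\deg(JQN,(-R,R),0)=1\neq0$: this is $(iii)$. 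Lemma~\ref{teodemawhin} now yields $u\in\overline\Omega\cap W$ with $Lu=Nu$, i.e.\ $u$ absolutely continuous on $[0,T]$, $u(0)=u(T)$, and $u'(t)+\frac{B\beta(t-\delta)}{\beta(t)}u'(t-\delta)=h(t,u_t)$ a.e.; extending $u$ to $[-r,T]$ by $T$--periodicity (Remark~\ref{convencao}) gives a solution of \eqref{s2} in the sense of Definition~\ref{definicao2neutras}. Finally, $(H1)$ applied to the equation written as $u'(t)=h(t,u_t)-\frac{B\beta(t-\delta)}{\beta(t)}u'(t-\delta)$ shows $u$ is continuously differentiable on $[-r,T]$, which is the assertion of Theorem~\ref{teoremaexistenciasemimpulsoneutra}.
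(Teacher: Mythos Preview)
Your argument is correct and follows the same overall architecture as the paper (Mawhin's continuation theorem applied to the operators $L,N$ of \eqref{definicaodeLeNneutra}--\eqref{definicaodeNeNneutra}), but several technical choices differ. The paper observes, by a direct computation exploiting $\beta(0)=\beta(T)=\beta(-\delta)=\beta(T-\delta)=1$ together with the periodic convention, that $\int_0^T Lx\,dt=0$ for every $x\in W$; this yields the simple description $\Img L=\{y:\int_0^T y=0\}$ and allows the plain averaging projector $Qy=\tfrac1T\int_0^T y$. With that $Q$, conditions $(ii)$ and $(iii)$ follow immediately from $(H2)$ without any study of $A^{-1}$; your ``auxiliary fact'' about $\int_0^T A^{-1}g>0$ for $g>0$ (which you need because your $Q$ is built from $A^{-1}$) becomes unnecessary. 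Likewise, in the a~priori estimate the paper integrates $Lx=\lambda Nx$ to get $\int_0^T h(t,x_t)\,dt=0$ directly, and then uses an $L^2$ argument (multiply by $x'$, integrate, H\"older) rather than your $L^\infty$ bound on $x'$. Your route has the merit that the sup--norm estimate in Steps~2--3 is more elementary than the $L^2$ computation and makes the role of the constant in $(H4)$ transparent; the paper's route has the merit of a simpler $Q$ and of dispensing with the Neumann--series positivity argument altogether. Both lead to the same bounded set $\Omega$ (up to the value of the radius) and to the same invocation of Remark~\ref{remarkoperadores} for the $C^1$ regularity.
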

The proof of Theorem \ref{teoremaexistenciasemimpulsoneutra}, which is essentially based on Lemma \ref{teodemawhin}, will be presented at the end of the section.

\begin{remark}\label{remarkoperadores}
Note that, if there is $\hat{x}\in W$ satisfying the equality $L\hat{x}=N\hat{x}$, then the function $x:[-r,T]\to \mathbb{R}$ given by $x(t)=\hat{x}(t)$ for $t\in[0,T]$ and $x(t)=\hat{x}(t+T)$ for $t\in[-r,0)$ is a solution for problem \eqref{s2} on $[-r,T]$.
\end{remark}

Thus, in order to demonstrate Theorem \ref{teoremaexistenciasemimpulsoneutra} we will show that conditions $(H1)$ to $(H4)$ imply  that the assumptions of Lemma \ref{teodemawhin} are satisfied when $L$ and $N$ are  defined as in (\ref{definicaodeLeNneutra}) and (\ref{definicaodeNeNneutra}).

Proposition \ref{lemaoperadorA} below shows an important property of $A$, based on the following classical result in Functional Analysis.

\begin{lemma}\label{lemaanalisefuncional}
Let $E$ be a Banach space and $F:E\to E$ a bounded linear operator such that $\|F\|<1$. Then $(Id-F)$ is a bijective operator and
\[
\|(Id-F)^{-1}\|\leq \frac{1}{1-\|F\|}.
\]
\end{lemma}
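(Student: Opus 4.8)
The plan is to use the classical Neumann series. First I would consider the formal series $S = \sum_{n=0}^{\infty} F^n$, with the convention $F^0 = Id$. Since $\|F^n\| \leq \|F\|^n$ for every $n$ and $\|F\| < 1$, the numerical series $\sum_{n=0}^{\infty}\|F\|^n$ converges, with sum $\frac{1}{1-\|F\|}$. Because $E$ is a Banach space, the space $\mathcal{L}(E)$ of bounded linear operators on $E$, equipped with the operator norm, is also a Banach space; hence the series defining $S$ is absolutely convergent and therefore converges in $\mathcal{L}(E)$ to a bounded linear operator $S$.

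Next I would check that $S$ is a two-sided inverse of $Id - F$. For the partial sums $S_N = \sum_{n=0}^{N} F^n$, a telescoping computation gives $(Id-F)S_N = S_N(Id-F) = Id - F^{N+1}$. Since $\|F^{N+1}\| \leq \|F\|^{N+1} \to 0$ as $N \to \infty$, passing to the limit and using that left- and right-composition by the fixed operator $Id-F$ are continuous on $\mathcal{L}(E)$ yields $(Id-F)S = S(Id-F) = Id$. Therefore $Id - F$ is bijective, with $(Id-F)^{-1} = S$.

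Finally, the norm estimate is immediate from the triangle inequality applied to the convergent series: $\|(Id-F)^{-1}\| = \|S\| \leq \sum_{n=0}^{\infty}\|F^n\| \leq \sum_{n=0}^{\infty}\|F\|^n = \frac{1}{1-\|F\|}$. There is no genuine obstacle in this argument; the only step deserving attention is the appeal to completeness of $\mathcal{L}(E)$, which is what turns the formal Neumann series into an actual bounded operator, and this is precisely where the hypothesis that $E$ is a Banach space is used.
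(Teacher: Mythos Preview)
Your argument via the Neumann series is correct and is the standard proof of this classical fact. The paper itself does not prove this lemma at all; it merely states it as a ``classical result in Functional Analysis'' and uses it as a black box, so your proposal in fact supplies what the paper omits.
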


\begin{proposition}\label{lemaoperadorA}
If condition $(H3)$ holds, then the operator $A$, defined in (\ref{definicaoA}), is bijective and its inverse satisfies $\|A^{-1}\|\leq k/(k-BK)$ (where $k$ and $K$ are given in $(H3)$).
\end{proposition}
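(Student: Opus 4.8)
The plan is to exhibit $A$ as a perturbation of the identity by a contraction and then quote Lemma \ref{lemaanalisefuncional}. I would set $F:X\to X$ by
\[
(Fx)(t)=-\frac{B\,\beta(t-\delta)}{\beta(t)}\,x(t-\delta),\qquad t\in[0,T],
\]
with the convention $x(t)=x(t+T)$ for $t\in[-r,0)$ from Remark \ref{convencao}, so that definition \eqref{definicaoA} reads precisely $A=Id-F$. It then suffices to show that $F$ is a bounded linear operator on $X$ with $\|F\|<1$, since Lemma \ref{lemaanalisefuncional} (applied with $E=X$) immediately yields that $A$ is bijective with $\|A^{-1}\|\le(1-\|F\|)^{-1}$.

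First I would check that $F$ maps $X=G([0,T],\mathbb{R})$ into itself: $\beta$ is piecewise constant with finitely many jumps on $[-r,T]$, hence regulated, and strictly positive, so $t\mapsto\beta(t-\delta)/\beta(t)$ is a bounded regulated function on $[0,T]$; the map $t\mapsto x(t-\delta)$ is, under the stated convention, a translate modulo $T$ of the regulated function $x$, hence regulated; and a product of regulated functions is regulated. Linearity of $F$ is obvious. Next I would estimate the norm: from $k\le\beta(s)\le K$ for every $s\in[-r,T]$ (the defining property of $k$ and $K$ in $(H3)$) one gets $\beta(t-\delta)/\beta(t)\le K/k$ on $[0,T]$, hence $|(Fx)(t)|\le (BK/k)\|x\|_\infty$ (here $B\ge 0$), so that $\|F\|\le BK/k$. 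Condition $(H3)$, namely $B<k/K$, then forces $\|F\|<1$.

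Finally, Lemma \ref{lemaanalisefuncional} gives that $A=Id-F$ is bijective and, since $s\mapsto 1/(1-s)$ is increasing on $[0,1)$,
\[
\|A^{-1}\|=\|(Id-F)^{-1}\|\le\frac{1}{1-\|F\|}\le\frac{1}{1-BK/k}=\frac{k}{k-BK},
\]
which is exactly the asserted bound. I do not expect a genuine obstacle here; the only step needing a moment's attention is confirming that $Fx$ is again a regulated function (so that the abstract lemma applies on $X$ itself), and the rest is the standard Neumann-series estimate.
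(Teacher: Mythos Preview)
Your proof is correct and follows exactly the paper's approach: define $F$ so that $A=Id-F$, estimate $\|F\|\le BK/k<1$ via $(H3)$, and invoke Lemma~\ref{lemaanalisefuncional}. The only difference is that you add the (worthwhile) verification that $F$ actually maps $X$ into itself, which the paper leaves implicit.
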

\begin{proof}
Let $F:X\to X$ be given by
\[
Fx(t)=-\frac{B\beta(t-\delta)x(t-\delta)}{\beta(t)},\quad t\in[0,T].
\]

For each $x\in X$ we have defined $x(t)=x(t+T)$, $t\in [-r,0]$. Thus, since $k= \displaystyle\min_{-r\leq t\leq T}\beta(t)$ and $K=\displaystyle\max_{-r\leq t\leq T}\beta(t)$ (see $(H3)$), we have
\begin{eqnarray*}
\|Fx\|&=&\sup_{t\in[0,T]}\frac{B\beta(t-\delta)|x(t-\delta)|}{\beta(t)}\\\\
&\leq& \frac{BK}{k}\sup_{t\in[0,T]}|x(t-\delta)|=\frac{BK}{k}\sup_{t\in[0,T]}|x(t)|=\frac{BK}{k}\|x\|_\infty.
\end{eqnarray*}
This and assumption $(H3)$ imply
\begin{equation*}
\|F\|\leq \frac{BK}{k} <1.
\end{equation*}
Then, by Lemma \ref{lemaanalisefuncional} we conclude that the operator $A=(Id-F):X\to X$ is bijective and
\[
\|A^{-1}\|=\|(Id-F)^{-1}\|\leq \frac{1}{1-\|F\|}\leq \frac{k}{k-BK}
\]
and the proof is complete.
\end{proof}

Now we can prove the following:
\begin{proposition}\label{Ledefredholmneutras}
If condition $(H3)$ holds, then the operator $L$, defined in (\ref{definicaodeLeNneutra}), is Fredholm of index zero.
\end{proposition}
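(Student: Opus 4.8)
The plan is to exploit the factorisation $L=A\circ\Lambda$, where $\Lambda:W\to X$ is the ordinary differentiation operator $\Lambda x=x'$ and $A$ is the operator of \eqref{definicaoA}. By Proposition \ref{lemaoperadorA}, condition $(H3)$ makes $A:X\to X$ a bounded linear bijection with bounded inverse; hence the Fredholm character and the index of $L$ coincide with those of $\Lambda$, and it suffices to analyse $\Lambda$.

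First I would identify the kernel. If $x\in W$ and $\Lambda x=0$, then $x'=0$ almost everywhere, so $x$ is constant (by absolute continuity), and conversely every constant function lies in $W$ since it trivially satisfies $x(0)=x(T)$ and has regulated derivative. Thus $\Ker\Lambda$ is the one-dimensional space of constant functions, and since $A$ is injective, $\Ker L=\Ker\Lambda$, so $\dim\Ker L=1$.

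Next I would determine the range. For $x\in W$ the fundamental theorem of calculus for absolutely continuous functions gives $\int_0^T x'(t)\,dt=x(T)-x(0)=0$, so $\Img\Lambda\subseteq Z:=\{\,y\in X:\int_0^T y(t)\,dt=0\,\}$. Conversely, if $y\in Z$, put $x(t)=\int_0^t y(s)\,ds$; since $y$ is regulated it is bounded and has at most countably many discontinuities, so $x$ is Lipschitz, $x'=y$ almost everywhere, and $x(0)=0=x(T)$ by the choice of $y$, whence $x\in W$ and $\Lambda x=y$. Therefore $\Img\Lambda=Z$. The linear functional $y\mapsto\int_0^T y(t)\,dt$ is continuous on $X$ (bounded by $T\|y\|_\infty$) and not identically zero, so $Z$ is a closed subspace of $X$ of codimension one; hence $\coKer\Lambda=X/\Img\Lambda$ is one-dimensional.

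Finally I would transport these facts through $A$. Since $A$ is a homeomorphism of $X$, the set $\Img L=A(Z)=\{w\in X:A^{-1}w\in Z\}$ is the preimage of the closed set $Z$ under the continuous map $A^{-1}$, hence closed, and $A$ induces an isomorphism $X/\Img\Lambda\to X/\Img L$, so $\dim\coKer L=\dim\coKer\Lambda=1$. Combining, $\Ker L$ and $\coKer L$ are finite-dimensional, $\Img L$ is closed, and $\ind L=\dim\Ker L-\dim\coKer L=1-1=0$; that is, $L$ is Fredholm of index zero. The one delicate point is the description of $\Img\Lambda$: one must check that the antiderivative of a zero-mean regulated function genuinely lies in $W$ with almost-everywhere derivative equal to the given function, which is exactly where the structural regularity of regulated functions (boundedness and countability of the set of discontinuities) is used.
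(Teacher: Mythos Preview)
Your proof is correct and takes a cleaner, more structural route than the paper. The paper works directly with $L$: it computes $\int_0^T Lx(t)\,dt=0$ using the boundary values of $\beta$ together with the periodic convention of Remark~\ref{convencao}, then exhibits a preimage $x=A^{-1}\bigl(\int_0^{\cdot} y(s)\,ds\bigr)$ for each zero-mean $y$, thereby identifying $\Img L$ \emph{explicitly} with $Z=\{y\in X:\int_0^T y=0\}$; closedness is checked by a sequential argument and codimension one via the projector $Q$ of \eqref{definicaodeQneutras}. You instead factor $L=A\circ\Lambda$, analyse the elementary differentiation operator $\Lambda$ (whose kernel and range are classical), and transport the conclusions through the isomorphism $A$ furnished by Proposition~\ref{lemaoperadorA}. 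Your route avoids the boundary computation involving $\beta$ and is more robust, but it yields only $\Img L=A(Z)$, not the sharper identification $\Img L=Z$ that the paper obtains; note that the latter is invoked in the proof of Proposition~\ref{lemaqnomegalimitadoneutra} to verify $\Ker Q=\Img L$ for the concrete projector $Qy=\tfrac{1}{T}\int_0^T y$. For the present proposition your argument is complete, and your closing caveat about the antiderivative of a regulated function is well placed---this is a point the paper also handles informally.
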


\begin{proof}
Suppose that condition $(H3)$ is satisfied.
Let us start by showing that
\begin{equation}\label{imagemdeLneutras}
\Img L=\left\{y\in X;\ \int_0^T y(t)dt=0\right\}.
\end{equation}

Let $y\in X$ be such that $y=Lx$, for some $x\in W$. Then,
\begin{eqnarray*}
\int_0^T y(t)dt&=&\int_0^T Lx(t)dt=\int_0^T Ax^\prime (t)dt =\int_0^T \left[x^\prime(t)+\frac{B\beta(t-\delta)}{\beta(t)}x^\prime(t-\delta)\right]dt\\\\
&=&\int_0^T x^\prime(t)dt+B\int_0^T\frac{\beta(t-\delta)}{\beta(t)}x^\prime(t-\delta)dt\\\\
&=&[x(T)-x(0)]+B\left[\frac{\beta(T-\delta)x(T-\delta)}{\beta(T)}-\frac{\beta(-\delta)x(-\delta)}{\beta(0)}\right]\\\\
&=&0,
\end{eqnarray*}
where the last equality follows from the condition $x(0)=x(T)$, the definition of $\beta$ and Remark \ref{convencao}.
Thus,
\begin{equation}\label{imagemdeLneutras0}
\Img L \subset \left\{y\in X;\ \int_0^T y(t)dt=0\right\}.
\end{equation}

Let $y\in X$ be such that $\int_0^T y(t)dt=0$. Consider $x\in W$ given by
\[
x(t)=A^{-1}\left(\int_0^t y(s)ds\right),\quad t\in[0,T].
\]
Then
\[
Lx(t)= Ax^\prime(t)=\frac{d}{dt}\left[Ax(t)\right]=\frac{d}{dt}\left[\int_0^t y(s)ds\right]=y(t), \quad t\in[0,T].
\]
From this and (\ref{imagemdeLneutras0}) we get (\ref{imagemdeLneutras}).

Let us see now that $\Img L$ is closed in $X$. Let $\{y_n\}$ be a sequence in $\Img L$ converging to an element $y\in X$. Then,
\[
\int_0^Ty(t)dt=\int_0^T\lim_{n\to \infty}y_n(t)dt=\lim_{n\to \infty}\int_0^T y_n(t)dt=0,
\]
that is, $y\in \Img L$.

To finish the proof, it remains to show that $\dimm \Ker L =\codimm \Img L<+\infty$.
We begin by showing that
$\dim \Ker L=1$. In fact, if $x\in \Ker L$, then $0=Lx=Ax^\prime$ and thereby $x^\prime =0$. Therefore, since $x$ is absolutely continuous, $x$ is a constant function and thus $\dim \Ker L=1$.
Finally, we show that $\codimm \Img L=1$. Define the linear operator
\begin{equation}\label{definicaodeQneutras}
Qx=\frac{1}{T}\int_0^T x(t)dt,\quad x\in X.
\end{equation}
Then
\[
Q(Qx)=\frac{1}{T}\int_0^T Qx(t)dt=\frac{1}{T}\int_0^T\left[\frac{1}{T}\int_0^T x(\tau)d\tau\right]dt=\frac{1}{T}\int_0^T x(\tau)d\tau=Qx, \quad \textrm{for}\, x\in X.
\]
This shows that $Q$ is an idempotent operator. Moreover, $\Img Q \cap \Ker Q=\{0\}$. In fact, if $y\in \Img Q$, then there exists $x\in X$ such that
\[
y=\frac{1}{T}\int_0^T x(t)dt,
\]
that is, $y$ is a constant function and hence, belonging to $\Ker Q$, is identically zero. Thus, $\Img Q \cap \Ker Q=\{0\}$, and then $X=\Ker Q \,\oplus\, \Img Q$, since $Q$ is linear and idempotent. Thus, $\codimm \Ker Q=1$, since $\Img Q \simeq \mathbb{R}$. Moreover, by (\ref{imagemdeLneutras}), $\Img L =\Ker Q$ and this complete the proof.
\end{proof}

We show here that there is a bounded and open set $\Omega \subset X$ such that $N$ is $L$-compact in the closure of $\Omega$ and such that conditions $(i)$, $(ii)$ and $(iii)$ of Lemma \ref{teodemawhin} hold. In the sequel, in order to find $\Omega$, we obtain an {\it a priori} estimate for the equation
\begin{equation}\label{equacaolambda}
Lx(t)=\lambda Nx(t),\quad t\in[0,T],
\end{equation}
where $\lambda\in(0,1)$ is fixed.

\begin{proposition}\label{lGi}
Suppose that conditions $(H1)$ and $(H2)$ hold.
If
$x\in W$ satisfies (\ref{equacaolambda}), then
\[
\|x\|_\infty \leq d+\sqrt{T}\|x^\prime\|_2,
\]
where $\|x^\prime\|_2=\left(\displaystyle\int_0^T x^\prime(t)^2dt\right)^{1/2}$ and $d$ is the constant present in assumption $(H2)$.
\end{proposition}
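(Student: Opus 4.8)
The plan is to obtain the bound in two movements: first, produce a single point $t_0\in[0,T]$ where $|x(t_0)|\le d$, by combining the sign condition $(H2)$ with the fact that $L$ has ``mean zero'' range; then pass from that pointwise bound to the sup-norm bound by integrating $x'$ and applying the Cauchy--Schwarz inequality.

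\emph{Step 1: there exists $t_0\in[0,T]$ with $|x(t_0)|\le d$.} Integrate $(\ref{equacaolambda})$ over $[0,T]$. Since $Lx\in\Img L$, identity $(\ref{imagemdeLneutras})$ gives $\int_0^T Lx(t)\,dt=0$, hence $\lambda\int_0^T Nx(t)\,dt=0$, and because $\lambda\neq 0$,
\[
\int_0^T h(t,x_t)\,dt=\int_0^T\frac{f(t,\beta_t x_t)}{\beta(t)}\,dt=0 .
\]
The integrand is well defined and integrable here: using the periodic convention of Remark~\ref{convencao}, $\beta x$ is a regulated function on $[-r,T]$, so by the hypotheses on $f$ the map $t\mapsto f(t,\beta_t x_t)=f(t,(\beta x)_t)$ belongs to $G([0,T],\mathbb{R})$, and division by $\beta\ge k>0$ keeps it regulated. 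Now suppose, towards a contradiction, that $|x(t)|>d$ for every $t\in[0,T]$. As $x$ is continuous and $[0,T]$ is connected, $x([0,T])$ lies in one of the two components of $\mathbb{R}\setminus[-d,d]$; say $x(t)>d$ for all $t$ (the other case is symmetric). Then for each $t$ we have $x_t\in G([-r,0],\mathbb{R})$ with $|x_t(0)|=|x(t)|>d$, so $(H2)$ gives $x(t)\,f(t,\beta_t x_t)>0$; since $x(t)>0$ and $\beta(t)>0$, this forces $h(t,x_t)>0$ for \emph{every} $t\in[0,T]$. But $t\mapsto h(t,x_t)$ is regulated, hence measurable, and positive on $[0,T]$, a set of positive measure, so $\int_0^T h(t,x_t)\,dt>0$, contradicting the displayed identity. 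In the case $x(t)<-d$ on $[0,T]$ one likewise gets $h(t,x_t)<0$ throughout, whence $\int_0^T h(t,x_t)\,dt<0$, again impossible. Therefore some $t_0\in[0,T]$ satisfies $|x(t_0)|\le d$.

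\emph{Step 2: conclusion.} For an arbitrary $t\in[0,T]$, absolute continuity of $x$ and the choice of $t_0$ give
\[
|x(t)|\le|x(t_0)|+\left|\int_{t_0}^{t}x'(s)\,ds\right|\le d+\int_0^T|x'(s)|\,ds\le d+\sqrt{T}\,\|x'\|_2 ,
\]
the last inequality being Cauchy--Schwarz applied to $|x'|$ and the constant $1$ on $[0,T]$. Taking the supremum over $t\in[0,T]$ yields $\|x\|_\infty\le d+\sqrt{T}\,\|x'\|_2$.

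The only genuinely delicate point is Step 1, namely turning the balance identity $\int_0^T h(t,x_t)\,dt=0$ into a pointwise constraint on $x$: the mechanism is that $(H2)$ pins the sign of $h(t,x_t)$ as soon as $|x|$ stays above $d$, while continuity of $x$ (through the intermediate value theorem) is exactly what rules out $x$ jumping between the regions $\{x>d\}$ and $\{x<-d\}$ without ever meeting $[-d,d]$. Step 2 is the routine ``integrate the derivative'' estimate and presents no obstacle.
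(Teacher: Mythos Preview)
Your proof is correct and follows the same two-step skeleton as the paper's: integrate (\ref{equacaolambda}) over $[0,T]$ to obtain $\int_0^T h(t,x_t)\,dt=0$, extract a point where $|x|\le d$, then pass to the sup-norm bound via the fundamental theorem of calculus and Cauchy--Schwarz. Step~2 is identical to the paper's.

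The one genuine difference is the mechanism in Step~1. The paper invokes $(H1)$ to assert that $t\mapsto f(t,\beta_t x_t)$ is \emph{continuous} on $[0,T]$; since its $\beta^{-1}$-weighted integral vanishes and $\beta>0$, it must have a zero at some $\tau$, and then $(H2)$ applied at that single $\tau$ forces $|x(\tau)|<d$. You instead argue by contradiction using only $(H2)$: if $|x|>d$ throughout, continuity of $x$ together with the intermediate value theorem confines $x$ to one side of $[-d,d]$, whereupon $(H2)$ gives $h(t,x_t)$ a fixed sign on all of $[0,T]$, contradicting the zero integral. Your route is slightly more economical in that it never calls on $(H1)$ and avoids the question of how $(H1)$, which is stated for a \emph{fixed} $\varphi$, delivers continuity of $t\mapsto f(t,\beta_t x_t)$; the paper's route yields the marginally sharper strict inequality $|x(\tau)|<d$, which plays no role downstream.
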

\begin{proof}
Assume that conditions $(H1)$ and $(H2)$ are fulfilled and that there is $x\in W$ satisfying (\ref{equacaolambda}).
Integrating (\ref{equacaolambda}) on $[0,T]$, we obtain
\begin{equation}\label{integrandoambosneutras}
\int_0^T\left[x^\prime(t)+\frac{B\beta(t-\delta)x^\prime(t-\delta)}{\beta(t)}\right]dt=\lambda \int_0^Th(t,x_t)dt.
\end{equation}
From Remark \ref{convencao}, the definition of $\beta$ and from the equality $x(0)=x(T)$, we infer that the left   side of (\ref{integrandoambosneutras}) is zero. Thereby,
\begin{equation*}
\int_0^T h(t,x_t)dt=0,
\end{equation*}
that is,
\begin{equation}\label{intdeh0}
\int_0^T \frac{f(t,\beta_tx_t)}{\beta(t)}dt=0.
\end{equation}
Since
$\beta$
is integrable, positive and, from $(H1)$, the function $t\mapsto f(t,\beta_tx_t)$ is continuous on $[0,T]$, then, from (\ref{intdeh0}) it follows that there exists
$\tau \in [0,T]$
such that
\begin{equation}\label{eqtau}
f(\tau,\beta_\tau x_\tau)=0.
\end{equation}
This implies
$|x(\tau)|<d$.
In fact, if
$|x(\tau)|\geq d$,
then
$|x_\tau (0)|\geq d$
and therefore, from assumption $(H2)$, $x_\tau (0)f(\tau,\beta_\tau x_\tau)>0,$
which contradicts (\ref{eqtau}). Accordingly,
$|x(\tau)|<d$. This and the triangle and H\"{o}lder inequalities imply
\[
|x(t)|=\left|x(\tau)+\int_\tau^t x^\prime (s)ds\right|< d + \int_0^T \left|x^\prime (s)\right|ds \leq d + \sqrt{T}\left\|x^\prime\right\|_2,\quad t\in [0,T],
\]
and the proof is complete.
\end{proof}

\begin{proposition}\label{lema2.5}
If conditions $(H1)$ to $(H4)$ hold, then there is a constant
$D>0$, with $D$ independent of $\lambda$,
such that, if
$x\in W$ satisfies (\ref{equacaolambda}), then $\|x\|_\infty \leq D$.
 \end{proposition}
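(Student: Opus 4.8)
The plan is to reduce everything to an $L^2$-estimate of $x'$ and then feed it into Proposition~\ref{lGi}, which already provides $\|x\|_\infty\le d+\sqrt{T}\,\|x'\|_2$. The point I will exploit is that, thanks to condition $(H4)$, the resulting bound will contain $\|x\|_\infty$ only with a coefficient strictly smaller than $1$, so that term can be absorbed into the left-hand side, producing a bound that moreover does not involve $\lambda$.

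First I would rewrite (\ref{equacaolambda}) in its pointwise form
\[
x'(t)=\lambda h(t,x_t)-\frac{B\beta(t-\delta)}{\beta(t)}\,x'(t-\delta),\qquad \text{a.e. }t\in[0,T],
\]
and take $L^2([0,T])$-norms. For the translated term I would use Remark~\ref{convencao}: since $x(0)=x(T)$, the extension of $x$ occurring there is the $T$-periodic one, which is absolutely continuous on every compact interval and whose a.e.\ derivative is the $T$-periodic extension of $x'$; a change of variables then gives $\int_0^T x'(t-\delta)^2\,dt=\int_0^T x'(t)^2\,dt$. Combining this with $\beta(t-\delta)/\beta(t)\le K/k$ and condition $(H3)$ (which yields $BK/k<1$) leads to
\[
\Bigl(1-\frac{BK}{k}\Bigr)\|x'\|_2\le\lambda\,\|h(\cdot,x_\cdot)\|_2\le\|h(\cdot,x_\cdot)\|_2 ,
\]
hence $\|x'\|_2\le\frac{k}{k-BK}\,\|h(\cdot,x_\cdot)\|_2$; here $\lambda\in(0,1)$ is used only to discard $\lambda$, so all constants stay independent of $\lambda$.

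Next I would bound $\|h(\cdot,x_\cdot)\|_2$ using $(H1)$ and $(H4)$. Writing $h(t,x_t)=f(t,\beta_tx_t)/\beta(t)$ and applying the Lipschitz estimate of $(H4)$ with $\varphi=\beta_tx_t$ and $\psi=0$ (note $(\beta_tx_t)(0)=\beta(t)x(t)$) gives $|f(t,\beta_tx_t)|\le M+b\beta(t)|x(t)|$, where $M:=\sup_{t\in[0,T]}|f(t,0)|$ is finite because the constant function $0$ is absolutely continuous, so by $(H1)$ the map $t\mapsto f(t,0)$ is continuous on the compact interval $[0,T]$. Dividing by $\beta(t)\ge k$ yields $|h(t,x_t)|\le M/k+b\|x\|_\infty$, whence $\|h(\cdot,x_\cdot)\|_2\le\sqrt{T}\bigl(M/k+b\|x\|_\infty\bigr)$. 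Inserting this and the previous estimate into Proposition~\ref{lGi} gives
\[
\|x\|_\infty\le d+\sqrt{T}\,\|x'\|_2\le d+\frac{TM}{k-BK}+\frac{bTk}{k-BK}\,\|x\|_\infty .
\]
By $(H4)$ we have $b<\dfrac{k-BK}{Tk}$, so $\theta:=\dfrac{bTk}{k-BK}<1$, and rearranging yields $\|x\|_\infty\le D:=\dfrac{1}{1-\theta}\Bigl(d+\dfrac{TM}{k-BK}\Bigr)$, which is independent of $\lambda$.

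The step that needs the most care — and which I expect to be the only genuine obstacle — is the identity $\|x'(\cdot-\delta)\|_{L^2[0,T]}=\|x'\|_{L^2[0,T]}$: one must justify that the translated term is read through the $T$-periodic extension of $x$, check that this extension is honestly absolutely continuous (so that its a.e.\ derivative is the periodic extension of $x'$ and the substitution is legitimate), and only then does the shift become norm-preserving. Everything after that is routine bookkeeping with the constants $k,K,b,B,T$ together with the inequalities furnished by $(H3)$ and $(H4)$.
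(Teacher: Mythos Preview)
Your argument is correct and follows essentially the same strategy as the paper: bound $\|x'\|_2$ in terms of $\|x\|_\infty$ via $(H4)$, then feed this into Proposition~\ref{lGi} and absorb the $\|x\|_\infty$ term using $b<\frac{k-BK}{Tk}$, arriving in fact at the very same constant $D$. The only cosmetic difference is that the paper multiplies (\ref{equacaolambda}) by $x'(t)$ and integrates (an energy-type step), whereas you take the $L^2$-norm of the equation directly and use the triangle inequality; both routes require the identity $\|x'(\cdot-\delta)\|_{L^2[0,T]}=\|x'\|_{L^2[0,T]}$ via the $T$-periodic extension, which you isolate explicitly while the paper uses it tacitly.
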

\begin{proof}
Suppose that conditions $(H1)$ to $(H4)$ are satisfied and that there is $x\in W$ verifying (\ref{equacaolambda}). Multiplying (\ref{equacaolambda}) by
$x^\prime(t)$, we obtain
\[
[x^\prime(t)]^2+ \frac{B\beta(t-\delta)x^\prime(t)x^\prime(t-\delta)}{\beta(t)}= \lambda h(t,x_t)x^\prime(t),\quad t\in[0,T].
\]
Using the definition of $h$, the triangle inequality and   $k\leq \beta(t)\leq K$, $-r\leq t\leq T$ (see assumption $(H3)$), we obtain
\begin{eqnarray*}
[x^\prime(t)]^2&<&\frac{|f(t,\beta_tx_t)||x^\prime(t)|}{\beta(t)}+\frac{BK|x^\prime(t)||x^\prime(t-\delta)|}{k}\\\\
&\leq&\frac{|f(t,\beta_tx_t)-f(t,0)||x^\prime(t)|}{\beta(t)}+\frac{|f(t,0)||x^\prime(t)|+BK|x^\prime(t)||x^\prime(t-\delta)|}{k},\quad t\in[0,T].
\end{eqnarray*}
This and assumption $(H4)$ imply
\[
[x^\prime(t)]^2\leq b|x(t)||x^\prime(t)|+\frac{|f(t,0)||x^\prime(t)|+BK|x^\prime(t)||x^\prime(t-\delta)|}{k},\quad t\in[0,T]
\]
and, thus
\[
[x^\prime(t)]^2\leq b\|x\|_\infty|x^\prime(t)|+\frac{|f(t,0)||x^\prime(t)|+BK|x^\prime(t)||x^\prime(t-\delta)|}{k},\quad t\in[0,T].
\]

Integrating both sides of the last inequality on $[0,T]$ and using the H\"{o}lder inequality, we obtain
\begin{eqnarray*}
\|x^\prime\|_2^2 \leq \sqrt{T}b\|x\|_\infty\|x^\prime\|_2+\frac{S\sqrt{T}}{k}\|x^\prime\|_2+\frac{BK}{k}\|x^\prime\|_2^2,
\end{eqnarray*}
where $|f(t,0)|\leq S$  for every $t\in[0,T]$. This and Proposition \ref{lGi} give us
\begin{eqnarray*}
\|x^\prime\|_2^2&\leq&bT\|x^\prime\|_2^2+\left(bd+\frac{S}{k}\right)\sqrt{T}\|x^\prime\|_2+\frac{BK}{k}\|x^\prime\|_2^2.
\end{eqnarray*}
Moreover, by assumption $(H4)$, $b<\displaystyle\frac{k-BK}{Tk}$. Thus, we get
\[
\|x^\prime\|_2 \leq \frac{(bdk+S)\sqrt{T}}{k-bTk-BK},
\]
which, by Proposition \ref{lGi}, implies
\[
\|x\|_\infty \leq D = d+\frac{(bdk+S)T}{k-bTk-BK}.
\]
This completes the proof.
\end{proof}

Now we are ready to introduce the following proposition.

\begin{proposition}\label{lemaqnomegalimitadoneutra}
If conditions $(H1)$ to $(H4)$ hold, then there is an open and bounded set $\Omega \subset X$ such that the operator $N$ is $L$-compact in $\overline{\Omega}$.
\end{proposition}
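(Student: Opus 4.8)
The plan is to produce $\Omega$ as a ball in $X$, to use $Q$ itself as the projector $P$ of (\ref{KerimLneutra}), to write down an explicit formula for $(L_P)^{-1}$, and then to verify the three items in the definition of $L$-compactness: continuity of $N$, boundedness of $QN(\overline{\Omega})$, and compactness of $(L_P)^{-1}(Id-Q)N$ on $\overline{\Omega}$.

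First I would fix $\Omega=\{x\in X:\ \|x\|_\infty<D+1\}$, where $D>0$ is the constant produced by Proposition \ref{lema2.5}; this set is open and bounded, and taking the radius strictly larger than $D$ will later keep solutions of (\ref{equacaolambda}) off $\partial\Omega$. Since $\Ker L$ is the space of constant functions and $\Img L=\Ker Q$ by Proposition \ref{Ledefredholmneutras}, the operator $Q$ of (\ref{definicaodeQneutras}) already satisfies $\Ker L=\Img Q$; hence I would take $P:=Q$, so that $W\cap\Ker P=\{x\in W:\int_0^T x(t)\,dt=0\}$ and $L_P: W\cap\Ker P\to\Img L$ is the isomorphism furnished by the Fredholm theory recalled above. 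Next, writing $Iy(t)=\int_0^t y(s)\,ds$, the computation already carried out inside the proof of Proposition \ref{Ledefredholmneutras} shows that $A^{-1}(Iy)\in W$ and $L\bigl(A^{-1}(Iy)\bigr)=y$ for every $y\in\Img L$; subtracting the $P$-component so as to land in $\Ker P$ gives
\[
(L_P)^{-1}y=(Id-P)\,A^{-1}\,I\,y,\qquad y\in\Img L,
\]
where $A^{-1}$ exists and is bounded by Proposition \ref{lemaoperadorA} (this is where $(H3)$ enters). Consequently $(L_P)^{-1}(Id-Q)N=(Id-P)\circ A^{-1}\circ I\circ(Id-Q)\circ N$ on $X$.

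Now the three checks. (a) $N$ is continuous, and in fact Lipschitz: for $x,y\in X$, applying $(H4)$ to the regulated functions $\beta_t x_t,\beta_t y_t\in G([-r,0],\mathbb{R})$ and using $(\beta_t x_t)(0)=\beta(t)x(t)$ gives $|Nx(t)-Ny(t)|=|f(t,\beta_t x_t)-f(t,\beta_t y_t)|/\beta(t)\le b\,|x(t)-y(t)|$, whence $\|Nx-Ny\|_\infty\le b\,\|x-y\|_\infty$. (b) $QN(\overline{\Omega})$ is bounded: for $x\in\overline{\Omega}$, writing $S=\sup_{t\in[0,T]}|f(t,0)|$ and using $(H4)$ together with $k\le\beta(t)\le K$ from $(H3)$, one gets $|Nx(t)|\le b(D+1)+S/k$, so $N(\overline{\Omega})$, hence $QN(\overline{\Omega})$, is bounded in $X$. (c) $(L_P)^{-1}(Id-Q)N$ is compact on $\overline{\Omega}$: it is continuous by (a); and by (b) the set $N(\overline{\Omega})$ is bounded, the bounded linear operator $Id-Q$ keeps it bounded, the integration operator $I$ carries this bounded set into a family of functions which is uniformly bounded and uniformly Lipschitz on $[0,T]$, hence relatively compact in $C([0,T])$ by the Arzel\`{a}--Ascoli theorem and therefore relatively compact in $X=G([0,T],\mathbb{R})$ (uniform limits of continuous functions being continuous), and finally the bounded linear operators $A^{-1}$ and $Id-P$ send relatively compact subsets of $X$ to relatively compact subsets of $X$. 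Thus $(L_P)^{-1}(Id-Q)N(\overline{\Omega})$ is relatively compact, and every requirement for $N$ to be $L$-compact in $\overline{\Omega}$ is met.

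The routine parts are (a) and (b), immediate from the Lipschitz hypothesis $(H4)$ and the two-sided bound $k\le\beta\le K$. The main obstacle is (c): one has to pin down $(L_P)^{-1}$ concretely enough that the integration operator $I$ appears (so that Arzel\`{a}--Ascoli can be invoked), and one must check with some care that the compactness obtained in $C([0,T])$ survives passage to the larger space $G([0,T],\mathbb{R})$ and that the linear maps $A^{-1}$ and $Id-P$ preserve it.
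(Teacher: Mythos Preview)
Your argument is correct and follows essentially the same route as the paper: choose $\Omega$ to be a ball of radius strictly larger than the a~priori bound $D$ of Proposition~\ref{lema2.5}, write $(L_P)^{-1}$ in terms of $A^{-1}$ and the integration operator, and then invoke Arzel\`a--Ascoli. The only noteworthy differences are cosmetic: the paper takes $Px=x(0)$ (so that $L_P^{-1}y(t)=\int_0^t A^{-1}y(s)\,ds$ already lands in $\Ker P$ without projecting), and it obtains the uniform bound on $|f(t,\beta_t x_t)|$ by appealing to $(H1)$ rather than, as you do, to the Lipschitz condition $(H4)$---your route here is in fact cleaner, since it yields a bound uniform in $x\in\overline{\Omega}$ directly.
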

\begin{proof}
Suppose that $(H1)$ to $(H4)$ are satisfied. By Proposition \ref{lema2.5}, there exists a constant $D>0$, which does not depend on $\lambda$, such that  $\|x\|_\infty \leq D$ for any $x\in W$ verifying (\ref{equacaolambda}). Let $M>D$ be given and consider the set
\begin{equation}\label{definicaodeomega}
\Omega=\left\{x\in W;\ \|x\|_\infty < M\right\}.
\end{equation}
Below we will show that the operator $N$ is $L$-compact in $\overline{\Omega}$.

Define the operators $P:X\to X$ and $Q:X\to X$ by
\[
Px=x(0)\qquad {\rm and} \qquad Qx=\frac{1}{T}\int_0^T x(t)dt.
\]

Then $\Img P=\Ker L $ and, by (\ref{imagemdeLneutras}), $\Ker Q=\Img L$.

We shall show that the set $QN({\overline{\Omega}})$ is bounded and the operator $L_P^{-1}(Id-Q)N:\overline{\Omega}\to X$ is compact, where $L_P^{-1}$
denote the inverse of the restriction of $L$ to $\Ker P$.

We begin by showing that $QN({\overline{\Omega}})$ is bounded. For each $x\in \overline{\Omega}$, we have
\begin{equation}\label{QNlimitadoneutras}
\|QNx\|_\infty=\frac{1}{T}\left|\int_0^T Nx(t)dt\right|= \frac{1}{T}\left|\int_0^T \frac{f(t,\beta_tx_t)}{\beta(t)}dt\right|\leq \frac{1}{T} \int_0^T \frac{|f(t,\beta_tx_t)|}{\beta(t)}dt.
\end{equation}

On the other hand, by assumption $(H1)$, there is $R>0$ such that $|f(t,\beta_tx_t|\leq R$, $t\in [0,T]$. As, in addiction, by assumption $(H3)$, $k\leq \beta(t)$, $t\in[0,T]$, then from (\ref{QNlimitadoneutras}) it follows that
\[
\|QNx\|_\infty \leq \frac{R}{k}.
\]

In the sequel, we show that the operator $L_P^{-1}(Id-Q)N:\overline{\Omega}\to X$ is compact. Let
$y\in L(W \cap \Ker P)$ be given and consider the function $x\in W\cap \Ker P$ defined by $x(t)=\displaystyle\int_0^t A^{-1}y(s)ds$. Then
\[
L x(t)=Ax^\prime(t)=AA^{-1}y(t)=y(t),\quad t\in[0,T].
\]

Therefore,
\begin{equation*}
L_P^{-1}y(t)=\int_0^t A^{-1}y(s)ds,\quad t\in[0,T].
\end{equation*}
Thus,
\begin{equation}\label{inversadeLpneutra}
L_P^{-1}(Id-Q)Nx(t)=\int_0^t A^{-1}(Id-Q)Nx(s)ds,\quad t\in[0,T].
\end{equation}

Let $\Lambda$ be a  subset of $\overline{\Omega}$ and
let $\{x^n\}$ be
a sequence of functions in ${\Lambda}$. In order to show that $L_P^{-1}(Id-Q)N$ is compact we will prove that there is a subsequence $\{x^{n_k}\}\subset \{x^n\}$ such that
$\{L_P^{-1}(Id-Q)N x^{n_k}\}$ is convergent. By Arzel\`a-Ascoli's Theorem is sufficient to show that the sequence of functions
$\{L_P^{-1}(Id-Q)N x^n\}$
is equicontinuous and uniformly bounded.

In the sequel, for the sake of simplicity, we will denote
$F_n=L_P^{-1}(Id-Q)N x^n$. We begin by showing that
$\{F_n\}$
is equicontinuous. Given
$t_0 \in[0,T]$ and $\epsilon >0$, let us show that there is
$\delta=\delta(t_0,\epsilon)>0$
such that
\begin{equation}\label{equicontinuidadedefinicaoneutra}
t \in (t_0-\delta,t_0+\delta)\cap [0,T] \Longrightarrow |F_n(t)-F_n(t_0)|<\epsilon,
\end{equation}
for any
$n\in \mathbb{N}$.

By (\ref{inversadeLpneutra}), we conclude that for each $n\in \mathbb{N}$ one has
\begin{eqnarray}\label{equicontinua}
|F_n(t)-F_n(t_0)|&=& \left|\int_0^t A^{-1}(Id-Q)Nx^n(s)ds-\int_0^{t_0} A^{-1}(Id-Q)Nx^n(s)ds\right|\nonumber\\\nonumber\\
&=&\left|\int_{t_0}^t A^{-1}(Id-Q)Nx^n(s)ds\right|\nonumber\\\nonumber\\
&\leq& \int_{t_0}^t |A^{-1}(Id-Q)Nx^n(s)|ds,\quad t\in[0,T].
\end{eqnarray}

On the other hand, by definitions of operators $N$ and $Q$ and the triangle inequality, we obtain
\begin{eqnarray*}
|(Id-Q)Nx(s)|&=&\left|\frac{f(s,\beta_sx_s)}{\beta(s)}
-\frac{1}{T}\int_0^T\frac{f(s,\beta_sx_s)}{\beta(s)}\right|\\\\
&\leq&\frac{2R}{k},\quad s\in[0,T].
\end{eqnarray*}
This and Proposition \ref{lemaoperadorA} imply
\begin{equation}\label{caso1b}
|A^{-1}(Id-Q)Nx^n(s)|\leq \|A^{-1}\|\frac{2R}{k}\leq \frac{2R}{k-BK},\quad s\in[0,T].
\end{equation}

Now, expression (\ref{equicontinua}) implies
\[
|F_n(t)-F_n(t_0)|\leq\frac{2R|t-t_0|}{k-BK},\quad t\in[0,T].
\]
Thus, taking $\delta< \epsilon (k-BK)/2R$ we obtain expression (\ref{equicontinuidadedefinicaoneutra}) and, therefore, the sequence of functions $\{F_n\}$ is equicontinuous.

Furthermore,
$\{F_n\}$
is uniformly bounded. In effect, by (\ref{caso1b}) and Proposition \ref{lemaoperadorA}, we obtain
\begin{eqnarray*}
|F_n(t)|&=&\left|\int_0^t A^{-1}(Id-Q)Nx^n(s)ds\right|\leq \int_0^{T} |A^{-1}(Id-Q)Nx^n(s)|ds\\\\
&\leq& T\|A^{-1}\|\|(Id-Q)Nx^n\|_\infty \leq \frac{2TR}{k-BK},\quad t\in[0,T],
\end{eqnarray*}
for every $n\in \mathbb{N}$.
\end{proof}

Let us now show that, if $(H1)$ to $(H4)$ hold, then the assumptions of Lemma \ref{teodemawhin} are satisfied.

\begin{proposition}\label{teomawhinaplicado}
Suppose that conditions $(H1)$ to $(H4)$ are fulfilled. Let $\Omega$ be the set defined in (\ref{definicaodeomega}) and $L$, $N$ and $Q$ the operators defined in (\ref{definicaodeLeNneutra}), (\ref{definicaodeNeNneutra}) and (\ref{definicaodeQneutras}). Then, we have:
\begin{itemize}
\item [($i$)]$x\in \partial\Omega\cap W \Longrightarrow Lx \neq \lambda Nx,\ \lambda\in (0,1)$;
\item [($ii$)]$x\in \partial\Omega\cap \Ker L \Longrightarrow QNx \neq 0$;
\item [($iii$)]$\deg(JQN,\Omega \cap \Ker L, 0)\neq 0$, where $\deg(JQN,\Omega \cap \Ker L, 0)$ is as in item $(iii)$ of Lemma \ref{teodemawhin}.
\end{itemize}
\end{proposition}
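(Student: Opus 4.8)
The plan is to verify, one by one, the three hypotheses of Lemma \ref{teodemawhin} for the operators $L$, $N$, $Q$ and the set $\Omega$ of \eqref{definicaodeomega}, drawing item $(i)$ from the \emph{a priori} bound of Proposition \ref{lema2.5} and running items $(ii)$ and $(iii)$ on assumption $(H2)$; since the genuine estimates have already been established, the argument is mostly organisational.

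For item $(i)$ I would argue by contraposition. Suppose $x\in W$ satisfies $Lx=\lambda Nx$ for some $\lambda\in(0,1)$, that is, $x$ solves \eqref{equacaolambda}. By Proposition \ref{lema2.5} there is a constant $D>0$, independent of $\lambda$, with $\|x\|_\infty\le D$, and $\Omega$ was chosen precisely so that $M>D$. Hence $\|x\|_\infty\le D<M$, so $x$ lies in the interior of $\Omega$ and cannot belong to $\partial\Omega$; therefore $x\in\partial\Omega\cap W$ forces $Lx\neq\lambda Nx$ for every $\lambda\in(0,1)$.

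For items $(ii)$ and $(iii)$ I would first recall from the proof of Proposition \ref{Ledefredholmneutras} that $\Ker L$ consists exactly of the constant functions. Thus a point of $\partial\Omega\cap\Ker L$ is a constant function $x\equiv c$ with $|c|=M$, and since $M>D\ge d$ (the constant of $(H2)$) the constant function $\varphi\equiv c$ satisfies $|\varphi(0)|=|c|\ge d$; applying $(H2)$ gives $c\,f(t,\beta_t c)>0$ for all $t\in[0,T]$. As $\beta(t)>0$, the integrand $t\mapsto f(t,\beta_t c)/\beta(t)$ then has constant sign on $[0,T]$, namely the sign of $c$, so
\[
QNx=\frac{1}{T}\int_0^T\frac{f(t,\beta_t c)}{\beta(t)}\,dt\neq 0,
\]
which is $(ii)$. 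For $(iii)$ I would identify $\Ker L$ and $\Img Q$ with $\mathbb{R}$ by sending a real number to the corresponding constant function and take $J:\Img Q\to\Ker L$ to be the resulting canonical isomorphism; then $\Omega\cap\Ker L$ becomes the interval $(-M,M)$ and $JQN$, restricted to $\Ker L$, is the continuous map
\[
g(c)=\frac{1}{T}\int_0^T\frac{f(t,\beta_t c)}{\beta(t)}\,dt,\qquad c\in[-M,M].
\]
The sign analysis just carried out yields $g(M)>0>g(-M)$, so $0\notin g(\{-M,M\})$ and the one-dimensional Brouwer degree is well defined; the affine homotopy $H(c,\mu)=\mu\,g(c)+(1-\mu)\,c$ does not vanish for $c=\pm M$, $\mu\in[0,1]$ (at $c=M$ both summands are $\ge 0$ with $g(M)>0$, and symmetrically at $c=-M$), so homotopy invariance gives $\deg(JQN,\Omega\cap\Ker L,0)=\deg(Id,(-M,M),0)=1\neq 0$.

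Candidly, the substantive work — the $\lambda$-uniform bound and the $L$-compactness of $N$ on $\overline{\Omega}$ — was already dispatched in Propositions \ref{lema2.5} and \ref{lemaqnomegalimitadoneutra}, so the only points needing a little care here are the identification reducing $(iii)$ to a one-dimensional degree computation and the verification, via $(H2)$, that the relevant boundary values of $g$ have opposite signs. With Proposition \ref{teomawhinaplicado} in hand, Lemma \ref{teodemawhin} applies to $L$ and $N$ on $\overline{\Omega}$ and produces a solution of $Lx=Nx$ in $\overline{\Omega}\cap W$, which by Remark \ref{remarkoperadores} is exactly the $C^1$ solution of \eqref{s2} asserted in Theorem \ref{teoremaexistenciasemimpulsoneutra}.
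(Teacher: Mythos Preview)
Your proof is correct and follows essentially the same route as the paper: item $(i)$ is drawn from the a priori bound of Proposition \ref{lema2.5} together with $M>D$; item $(ii)$ uses the identification $\Ker L\simeq\mathbb{R}$ and condition $(H2)$ to force the sign of $QNx$ for $x\equiv\pm M$; and item $(iii)$ reduces to a one-dimensional degree computation via the affine homotopy $H(c,\mu)=\mu g(c)+(1-\mu)c$ to the identity, with the same boundary sign argument. The only cosmetic differences are your contrapositive phrasing of $(i)$ and the slightly more compressed justification that $H(\pm M,\mu)\neq 0$; both are fine.
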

\begin{proof}
Since
$M>D$,
then, by Proposition \ref{lema2.5}, $Lx \neq \lambda Nx$,
for any
$x\in \partial\Omega\cap W$
and
$\lambda\in(0,1)$, that is, the condition
$(i)$
holds.

Now we prove $(ii)$. Let
$x\in \partial\Omega\cap \Ker L$ be given. By definition of $\Omega$ and since $\Ker L\simeq \mathbb{R}$, then $x\equiv M$ or $x\equiv -M$. In the first case,
$x_t(0)=M>D>d,\ t\in [0,T]$, which, by $(H2)$, implies
\begin{eqnarray*}
f(t,\beta_t x_t)>0,\quad t\in [0,T].
\end{eqnarray*}
Therefore,
\[
\int_0^T Nx(t)dt=\int_0^T \frac{f(t,\beta_t x_t)}{\beta(t)}dt >0.
\]

Analogously, if $x\equiv -M$, then $\int_0^T Nx(t)dt<0$. Hence, item $(ii)$ holds.

To end the proof, let us show that condition $(iii)$ holds. Since $\Img Q$ and $\Ker L$ can be identified, we can take $J$ as the identity operator $J:\Img Q \to \Ker L $, $J(x)=x$. Thus, we shall show that $\deg(QN,\Omega \cap \Ker L, 0)\neq 0$. In order to do it we use the homotopy invariance of the degree.

Set
$H: (\overline{\Omega}\cap \Ker L)\times [0,1]\to \mathbb {R}$ by
\[H(x,\lambda)=(1-\lambda)x + \frac{\lambda}{T}\int_0^T Nx(t)dt.
\]

Since
$\Omega\cap \Ker L=(-M,M)$, then
\[\partial (\Omega \cap \Ker L) \times [0,1]= \{-M,M\}\times [0,1].
\]

Now we prove that
$0\not \in H(\{-M,M\}\times [0,1])$.
For every $\lambda \in [0,1]$, we have
\begin{equation}\label{eq20}
H(M,\lambda)=(1-\lambda)M + \frac{\lambda}{T}\int_0^T \frac{f(t,\beta_t M)}{\beta(t)}dt.
\end{equation}
Note that the first term on the right member of (\ref{eq20}) is non-negative. Let us see that the same holds for the second term. Since $M>d$, then, by the assumption $(H2)$, we have
\[
f(t,\beta_t M)>0,\quad t\in [0,T],
\]
and thus,
\begin{equation}\label{segundaparcelanaonegativa}
\frac{\lambda}{T}\int_0^T \frac{f(t,\beta_t M)}{\beta(t)}dt \geq 0.
\end{equation}
Moreover, note  that we have the equality in (\ref{segundaparcelanaonegativa}) only when $\lambda =0$, but in this case the first term in the right member of (\ref{eq20}) is positive. On the other hand, the first term in the right member of (\ref{eq20}) vanishes only when $\lambda =1$, and in this case the second term is positive.
Therefore, $H(M,\lambda)>0$ for every $\lambda \in [0,1]$. Similarly, $H(-M,\lambda)<0$ for every $\lambda \in [0,1]$. This shows that $0\notin H(\partial(\Omega\cap \Ker L)\times [0,1])$, which means that $\deg(H(\cdot, 0),\Omega \cap \Ker L,0)$ and $\deg(H(\cdot, 1),\Omega \cap \Ker L,0)$ are well defined.

By homotopy invariance of degree, we have
\begin{equation}\label{invariancia}
\deg(H(\cdot, 0),\Omega \cap \Ker L,0)=\deg(H(\cdot,1),\Omega \cap \Ker L,0).
\end{equation}
Furthermore, for any $x\in \Omega \cap \Ker L$, we have
\begin{equation}\label{hdex1hdex0}
H(x,0)= x\qquad \mbox{and}\qquad H(x,1)= \frac{1}{T}\int_0^T Nx(t)dt= QNx(t).
\end{equation}
By (\ref{invariancia}), (\ref{hdex1hdex0}) and the normalization property of the degree, we obtain
\begin{eqnarray*}
\deg(QN,\Omega \cap \Ker L,0)&=&\deg(H(\cdot,1),\Omega \cap \Ker L,0)=\deg(H(\cdot,0),\Omega \cap \Ker L,0)\\
&=&\deg(Id,\Omega \cap \Ker L,0)=1,
\end{eqnarray*}
which finishes the proof.
\end{proof}

\begin{proof}
(of Theorem \ref{teoremaexistenciasemimpulsoneutra}) Assume that $(H1)$ to $(H4)$ hold. By Lemma \ref{teodemawhin} and Proposition \ref{teomawhinaplicado}, there is at least one $x\in W$ such that $Lx=Nx$. Then, by Remark \ref{remarkoperadores}, problem (\ref{s2}) has at least one solution on $[-r,T]$.
\end{proof}

Finally, we can conclude our process.
\begin{proof}
(of Theorem \ref{teoremaprincipalneutra})
The result follows immediately from Theorem \ref{teoremaexistenciasemimpulsoneutra} and Remark \ref{remarkequivalencia}.
\end{proof}

\section{Example}

In problem (\ref{s1}) take $r=2$, $\delta=1$, $T=2\pi$, $B=1/13$, $m=2$ and the numbers $b_k$ and the moments of impulse $t_k$ as
$b_1=2,\ b_2=3, t_1=1,\ t_2=3/2$, $t_{m+k}=t_k+T$, $b_{m+k}=b_k+T$, $k=1,2,\ldots$. In this way, the function $\beta$, defined in (\ref{denitionbeta}), takes the form $\beta:[-2,2\pi]\to(0,+\infty)$, with
\begin{equation*}
\beta(t)=\left\{
\begin{array}{ll}
1& \mbox{if}\ t\in[-2,1]\cup[2\pi-1,2\pi]\vspace{0,2cm}\\
3& \mbox{if}\ t\in(1,3/2]\vspace{0,2cm}\\
12& \mbox{if}\ t\in(3/2,2\pi-1).
\end{array}
\right.
\end{equation*}

Consider the problem
\begin{equation}\label{problemaexemplo}
\left\{
\begin{array}{ll}
x^\prime(t) + \displaystyle\frac{1}{13}x^\prime(t-\delta)=f(t,x_t)& \mbox{if}\ t\geq 0,\ t\neq t_1,t_2,\ldots\vspace{0,3cm}\\
x(t_k^+)-x(t_k)=b_k& k=1,2,\ldots,
\end{array}
\right.
\end{equation}
where the function $f:[0,+\infty)\times G([-2,0],\mathbb{R})\to \mathbb{R}$ is given by
$f(t,\varphi)=\displaystyle\frac{\varphi(0)|\cos t|}{27\pi \beta(t)}$.

Let us see that, in this case, conditions $(H1)$ to $(H4)$ are satisfied.
\begin{itemize}
\item [$(H1)$:] Let $\varphi:[-2,0]\to \mathbb{R}$ be an absolutely continuous function. Clearly, the function $t\mapsto f(t,\beta_t\varphi)=\displaystyle\frac{\varphi(0)|\cos t|}{27\pi }$ is continuous on $[0,2\pi]$.
\item [$(H2)$:] Let $\varphi$ be a function in $G([-2,0],\mathbb{R})$, such that $|\varphi(0)|\geq 1$. Then $\varphi(0)f(t,\beta_t\varphi)=\displaystyle\frac{\varphi(0)^2|\cos t|}{27\pi }>0$ for each $t\in[0,2\pi]$.
\item [$(H3)$:] Obviously, in this case we have $B<k/K$, where $k= \displaystyle\min_{-2\leq t\leq 2\pi}\beta(t)$ and $K=\displaystyle\max_{-2\leq t\leq 2\pi}\beta(t)$, since $B=1/13$, $k=1$ and $K=12$.
\item [$(H4)$:] Let $\varphi, \psi$ be two functions in $G([-2,0],\mathbb{R})$. Then, for each $t\in[0,2\pi]$, we obtain
\[
|f(t,\varphi)-f(t,\psi)|=\frac{|\cos t|}{27\pi \beta(t)}|\varphi(0)- \psi(0)|\leq b|\varphi(0)- \psi(0)|,
\]
where $b=\displaystyle\frac{1}{27\pi}<\displaystyle\frac{1}{26\pi}=\displaystyle\frac{k-BK}{Tk}$.
\end{itemize}

By Theorem \ref{teoremaprincipalneutra}, problem (\ref{problemaexemplo}) has {at least one $2\pi$-periodic solution on $[0,+\infty)$}.

{\section*{Acknowledgment}}

{The authors would like to thank the anonymous referees for their valuable comments and suggestions which helped to improve the manuscript.}

\end{document}